\newtheorem{thm}{Theorem}%[section]
\newtheorem{lem}[thm]{Lemma}
\theoremstyle{definition}
\newtheorem{defn}[thm]{Definizione}
\theoremstyle{remark}
\newcommand{\R}{\mathbb{R}}
\begin{document}

\title{Finiteness of  rank invariants of multidimensional persistent homology groups}
%    Information for first author
\author{Francesca Cagliari}
%    Address of record for the research reported here
\address{ Dipartimento di Matematica, Universit\`a di Bologna, P.zza di Porta S. Donato
5, I-$40126$ Bologna, Italia}
%    Current address
%\curraddr{Department of Mathematics and Statistics,
%Case Western Reserve University, Cleveland, Ohio 43403}
\email{cagliari@dm.unibo.it}
%%    \thanks will become a 1st page footnote.
%\thanks{The first author was supported in part by NSF Grant \#000000.}

%    Information for second author
\author{Claudia Landi}
\address{Dipartimento
di Scienze e Metodi dell'Ingegneria, Universit\`a di Modena e
Reggio Emilia, Via Amendola 2, Pad. Morselli, I-42100 Reggio
Emilia, Italia} \email{clandi@unimore.it}
\thanks{Research  partially carried out within the activities of ARCES (Universit\`a di Bologna).}

%    General info
\subjclass[2000]{Primary: 55N99; Secondary: 68T10, 54C15}

\date{} %and, in revised form, June 22, 2001.}

%\dedicatory{This paper is dedicated to our advisors.}

\keywords{Persistent topology, Shape analysis, Betti numbers, Euclidean neighborhood retract}

\begin{abstract}
Rank invariants are a parametrized version of Betti numbers of a space multi-filtered by a continuous vector-valued function. In this note we give a sufficient condition for their finiteness. This condition is sharp for spaces embeddable in $\R^n$.
\end{abstract}

\maketitle

\section*{Introduction}
Persistent Topology is a theory for studying objects related to computer vision and computer graphics, which involves analyzing the qualitative and quantitative behavior of real-valued functions defined over topological spaces. More precisely, it studies   the sequence of nested lower level sets of the considered functions and encodes the scale at which  a topological feature  (e.g., a connected component, a tunnel, a void) is created, and when it is annihilated  along this filtration. In this framework, multidimensional  persistent homology groups capture the homology of a multi-parameter increasing family of spaces.  For application purposes, these groups  are further encoded by simply considering their rank, yielding  to a parametrized version of Betti numbers, called rank invariants \cite{CaZo09}.

The aim of this note  is to prove the following theorem, providing a sufficient condition in order that multidimensional persistent homology groups are finitely generated (or, equivalently, rank invariants are finite):

\begin{thm}\label{finitezza_ranghi}
If $X$ is a  compact, locally contractible  subspace of $\R^n$, and $\vec f=(f_1,f_2,\ldots ,f_k):X\rightarrow \R^k$ is a continuous function, then,  for any  $q\in \mathbb{Z}$, the rank invariant $\rho_{(X,\vec f),q}$ takes only finite values: for any $\vec u=(u_1,u_2,\ldots ,u_k)$ and $\vec v=(v_1,v_2,\ldots,v_k)$ in $\R^k$, with $u_j<v_j$ ($j=1,2,\ldots, k$),
 $$\rho_{(X,\vec f),q}(\vec u,\vec v)\stackrel{\mathrm {def}}{=}\mathrm{rank\, }\mathrm{im\,}H_q(X_{\vec f\le \vec u}\hookrightarrow X_{\vec f\le \vec v})<+\infty ,$$  the map being the inclusion, and $X_{\vec f\le \vec u}$ and $X_{\vec f\le \vec v}$ denoting the lower level sets $\{x\in X: f_j(x)\le u_j,\, j=1,2,\ldots , k\}$ and  $\{x\in X: f_j(x)\le v_j,\, j=1,2,\ldots , k\}$, respectively.
\end{thm}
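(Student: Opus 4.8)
The plan is to exploit the three hypotheses on $X$ — compact, locally contractible, and embedded in $\R^n$ — to realize $X$ as a \emph{Euclidean neighborhood retract}. By Borsuk's classical theorem, a finite-dimensional compact metric space is an ANR if and only if it is locally contractible; since $X\subseteq\R^n$ is such a space, it is a compact ANR, and a compact ANR sitting inside $\R^n$ is an ENR. Hence there exist an open set $U$ with $X\subseteq U\subseteq\R^n$ and a retraction $r\colon U\to X$. The guiding idea is to \emph{never} compute the homology of the lower level sets directly, since $H_q(X_{\vec f\le\vec u})$ may well be infinitely generated, but instead to bound the image in the definition of $\rho$ by factoring the inclusion through a compact polyhedron, whose homology is automatically finitely generated.

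First I would set $A=X_{\vec f\le\vec u}$ and $B=X_{\vec f\le\vec v}$, and introduce the set $V=\{x\in X: f_j(x)<v_j,\ j=1,\dots,k\}$. The strict inequalities $u_j<v_j$ yield the chain $A\subseteq V\subseteq B$, where crucially $V$ is \emph{open} in $X$. Pulling $V$ back along the retraction gives $\Omega:=r^{-1}(V)$, an open subset of $\R^n$ that contains $A$: indeed $r$ fixes $X$ pointwise, so $a\in A\subseteq V$ forces $r(a)=a\in V$, whence $a\in\Omega$; moreover $r(\Omega)=V$ because $V\subseteq X=r(U)$.

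Next, using that $A$ is compact and contained in the open set $\Omega\subseteq\R^n$, I would insert a compact polyhedron $P$ (for instance a finite union of cubes of a sufficiently fine grid) with $A\subseteq P\subseteq\Omega$. Since $r$ restricts to the identity on $X\supseteq A$, the inclusion $A\hookrightarrow B$ factors as
$$A\hookrightarrow P\xrightarrow{\,r\,}B,$$
where $r(P)\subseteq r(\Omega)=V\subseteq B$. Passing to $H_q$, the image of $H_q(A\hookrightarrow B)$ is contained in the image of the induced map $H_q(P)\to H_q(B)$. As $P$ is a finite polyhedron, $H_q(P)$ is finitely generated, hence so is its image in $H_q(B)$; the subgroup $\mathrm{im}\,H_q(A\hookrightarrow B)$ is therefore finitely generated as well, and in particular has finite rank. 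This is exactly $\rho_{(X,\vec f),q}(\vec u,\vec v)<+\infty$.

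The main obstacle is concentrated in the very first step: confirming that $X$ really is an ENR and that the retraction may be leveraged to produce the factorization through $P$. Everything downstream — the existence of the polyhedral neighborhood, the factorization via $r$, and the finite generation of $H_q(P)$ — is routine once $r\colon U\to X$ is in hand. The conceptual content lies in converting ``compact $+$ locally contractible $+$ $\subseteq\R^n$'' into the existence of $r$, and in recognizing that it is precisely the open neighborhood $V$, available thanks to the strict inequalities $u_j<v_j$, that lets the compact set $A$ be trapped inside the open set $\Omega$ where a finite polyhedron can be interposed.
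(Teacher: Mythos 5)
Your proof is correct. It shares the paper's core mechanism --- factor the inclusion of sublevel sets through a compact polyhedron (a finite union of cubes) supplied by a retraction coming from local contractibility, then bound the rank by the finitely generated homology of that polyhedron --- but you deploy the ENR property at a different place, and this changes the logistics of the argument. The paper's Lemma~\ref{inclusione} applies the criterion of Theorem~\ref{thm:ENR} to the \emph{open} sublevel set $L_2=\mathrm{int}_X(X_{\vec f\le \vec v})$, which is only locally compact, obtains a retraction $r\colon L_3\to L_2$ of a neighborhood of $L_2$ in $\R^n$, traps the compact set $\mathrm{cl}_X(X_{\vec f\le \vec u})$ in a finite union of closed cubes $K\subseteq L_3$, and composes $r_{|K}$ with the inclusion into $K$; note that this produces a different retraction for each $\vec v$. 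You instead make $X$ itself an ENR once and for all (via Borsuk's theorem for compact, finite-dimensional, locally contractible metric spaces --- equivalently you could quote Theorem~\ref{thm:ENR} directly, since compactness implies local compactness), fix a single retraction $r\colon U\to X$, and manufacture the needed open set $\Omega=r^{-1}(V)$ containing $A=X_{\vec f\le\vec u}$ by pulling back the strict sublevel set $V=\{\vec f<\vec v\}$, so that any polyhedron squeezed between $A$ and $\Omega$ automatically maps under $r$ into $V\subseteq B$. Your variant buys two small advantages: only the compact case of the ENR criterion is invoked, and one retraction serves every pair $(\vec u,\vec v)$; your use of the strict inequalities $u_j<v_j$ (to get the open set $V$, since $r^{-1}(B)$ itself need not be open) plays exactly the role that the hypothesis $\mathrm{cl}_X(Q)\subseteq\mathrm{int}_X(P)$ plays in the paper. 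The paper's formulation, as a lemma about arbitrary subspaces $Q,P\subseteq X$ with $\mathrm{cl}_X(Q)\subseteq \mathrm{int}_X(P)$, is marginally more general in statement but equivalent in substance.
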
 

The finiteness condition for rank invariants has proved to be crucial for the stability of persistence diagrams \cite{CoEdHa07}, and the stability of multidimensional persistent homology groups \cite{CaDiFe07,CeDi*09}. In each of these papers, finiteness of persistent homology ranks was generally  guaranteed by requiring the topological space to be triangulable or imposing a tameness condition on the filtering function. In \cite{ChCo*09} it is  argued  that this functional setting is not large enough to address the problems encountered in practical applications, and stability of persistence diagrams is revisited. The basic assumption to prove stability in \cite{ChCo*09} is the finiteness of persistent homology ranks, but the question of how achieving this is  left unanswered.   Nevertheless, it is known that the ranks of  $0$th persistent homology groups (i.e. size functions) are finite provided that the space is only compact and locally connected \cite{CaDiFe07}. Theorem~\ref{finitezza_ranghi} settles this issue also for multidimensional persistent homology groups of spaces embeddable in the Euclidean space.

\section{Proof of Theorem \ref{finitezza_ranghi}}
We begin recalling the definition of  ENR and the criteria for a space to be an ENR, following \cite{Bredon93}.

\begin{defn}\label{def:ENR}
A topological subspace $Y$ of $\R^n$ is called a {\em Euclidean neighborhood retract} (ENR) if there is  a neighborhood in $\R^n$ of which $Y$ is a retract.
\end{defn}

\begin{thm}[cf. \cite{Bredon93}]\label{thm:ENR}
If $Y\subseteq \R^n$ is locally compact and locally contractible then $Y$ is an ENR. 
\end{thm}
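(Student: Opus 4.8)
The plan is to exhibit $Y$ as a retract of a neighborhood by building the retraction explicitly, cell-by-cell, over a decomposition of the complement, with local contractibility supplying the extensions at each stage. First I would use local compactness to put $Y$ in a convenient position: a locally compact subspace of the Hausdorff space $\R^n$ is locally closed, hence open in its closure, so there is an open set $U\subseteq \R^n$ in which $Y$ is closed. Since $U$ is open, every neighborhood of $Y$ contained in $U$ is already a neighborhood of $Y$ in $\R^n$; thus it suffices to retract a neighborhood of $Y$ inside $U$ onto $Y$, and we may argue as though $Y$ were a closed subset of the ambient open set $U$.

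Next I would take a Whitney-type decomposition of the open complement $U\setminus Y$ into a locally finite family of cubes $\{Q\}$ whose diameters are comparable to their distance from $Y$, so that $\mathrm{diam}(Q)\to 0$ as $Q$ approaches $Y$. This adaptive mesh is the mechanism that allows the scale of the construction to be matched to the scale on which $Y$ is locally contractible: near any point of $Y$ the relevant cubes are as small as we need.

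The heart of the argument is an inductive construction of the retraction $r$ over the skeleta of this decomposition. I would set $r=\mathrm{id}$ on $Y$ and send each vertex of the decomposition to a point of $Y$ nearly realizing its distance to $Y$. Assuming $r$ has been defined and is continuous on the $(p-1)$-skeleton, with each already-treated cell mapped into a small neighborhood in $Y$, I would extend over each $p$-cube $Q$: its boundary $\partial Q$ is mapped into a small set of $Y$ that, by local contractibility, contracts inside a slightly larger neighborhood, and the resulting null-homotopy furnishes an extension of $r|_{\partial Q}$ across $Q$ whose image lies in that controlled neighborhood. Because diameters shrink toward $Y$, these target neighborhoods shrink correspondingly. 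Letting $W$ be the union of $Y$ with the cubes meeting a fixed bounded neighborhood of $Y$, I would then check that $r\colon W\to Y$ is a continuous retraction: continuity off $Y$ is immediate from the cellwise definition, while continuity at a point $y\in Y$ follows because cubes approaching $y$ are forced to have images in ever-smaller neighborhoods of $y$. Hence $W$ is a neighborhood of $Y$ that retracts onto $Y$, so $Y$ is an ENR.

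The main obstacle I anticipate is the compatibility of the two scales in the inductive step. One must choose the Whitney mesh fine enough, relative to the radii on which small neighborhoods of each point of $Y$ are contractible, that every extension over a $p$-cube keeps its image within $Y$ and within a neighborhood that collapses as one nears the frontier of $Y$. Coordinating the mesh size with the local-contractibility radius, and deriving the quantitative estimates that guarantee continuity of $r$ up to $Y$, is precisely where local contractibility is consumed and constitutes the technical core of the proof.
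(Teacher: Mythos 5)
The paper gives no proof of this statement at all: it is quoted as a known result from Bredon's book (cf.\ \cite{Bredon93}), so there is no in-paper argument to compare against. Your outline is essentially the classical proof found in that literature (Bredon; also Dold's \emph{Lectures on Algebraic Topology}): use local compactness to realize $Y$ as a closed subset of an open $U\subseteq\R^n$, take a Whitney cube decomposition of $U\setminus Y$ with diameters comparable to the distance from $Y$, and extend a retraction skeleton-by-skeleton, with local contractibility supplying the extensions over cells and the shrinking cube sizes forcing continuity at points of $Y$. The approach is the right one and correctly locates the technical core (coordinating the mesh scale with the local-contractibility radii, and taking $W$ to be $Y$ together with only those cubes close enough to $Y$ for the inductive extension to succeed, after which one checks $W$ is a neighborhood); as written it remains a sketch precisely at that quantitative step, which is where the full proof in the cited source does its work.
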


In order to prove Theorem \ref{finitezza_ranghi} we anticipate a lemma.

\begin{lem}\label{inclusione}
Let $X$ be a compact, locally contractible subspace of $\R^n$. Let $P,Q$ be subspaces of $X$ with $\mathrm{cl}_X(Q)\subseteq \mathrm{int}_X(P)$.  Then, for any $q\in \mathbb{Z}$, it holds that 
$$\mathrm{rank\ }\mathrm{im\,}H_q( Q \hookrightarrow P)<+\infty,$$
the map being the inclusion.
\end{lem}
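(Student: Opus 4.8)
The plan is to exploit the hypothesis $\mathrm{cl}_X(Q)\subseteq\mathrm{int}_X(P)$ together with the fact that $X$, being compact and locally contractible, is an ENR by Theorem~\ref{thm:ENR}, so that there exist an open set $U\subseteq\R^n$ containing $X$ and a retraction $r\colon U\to X$. The idea is to factor the inclusion $Q\hookrightarrow P$ through a \emph{finite polyhedron} $L$ sitting in the ambient space $\R^n$; since such an $L$ has finitely generated homology, the rank of the image will be bounded by $\mathrm{rank}\,H_q(L)<+\infty$.

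First I would set $B:=\mathrm{int}_X(P)$ and $A:=\mathrm{cl}_X(Q)$. Since $X$ is closed in $\R^n$, the set $A$ is compact, and by hypothesis $A\subseteq B$. As $B$ is open in $X$ and $r$ is continuous, the set $W:=r^{-1}(B)$ is open in $\R^n$, satisfies $B\subseteq W$ and $r(W)\subseteq B\subseteq P$; in particular $A\subseteq B\subseteq W$. This is the step where the hypothesis is essential: it guarantees that the compact set $\mathrm{cl}_X(Q)$ lands inside the open set $W$ over which the retraction onto $\mathrm{int}_X(P)$ is defined.

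Next I would interpose a finite polyhedron. Because $A$ is compact and $W$ is open in $\R^n$, the distance $\delta:=\mathrm{dist}(A,\R^n\setminus W)$ is positive, so covering $A$ by the closed cubes of a cubical grid of mesh smaller than $\delta/\sqrt{n}$ that meet $A$ produces a compact polyhedron $L$ with $A\subseteq L\subseteq W$. Now I would observe that the inclusion $Q\hookrightarrow P$ factors as
$$Q\hookrightarrow L \xrightarrow{\ r|_L\ } B \hookrightarrow P,$$
the composite agreeing with the inclusion on $Q$ because $r$ fixes every point of $X\supseteq Q$ and $Q\subseteq B$. Passing to homology gives
$$\mathrm{im}\,H_q(Q\hookrightarrow P)\subseteq \mathrm{im}\big(H_q(L)\to H_q(P)\big),$$
whence $\mathrm{rank}\,\mathrm{im}\,H_q(Q\hookrightarrow P)\le \mathrm{rank}\,H_q(L)$. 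Since $L$ is a finite polyhedron, $H_q(L)$ is finitely generated, and the conclusion follows.

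I expect the main obstacle to be conceptual rather than computational: the subspaces $Q$ and $P$ are completely arbitrary, so their own homology may be wild and need not be finitely generated. The crucial manoeuvre is to use the retraction $r$ to transport the question into the tame finite polyhedron $L\subseteq\R^n$, enlarging the image only harmlessly. Verifying carefully that the factorization through $L$ really reproduces the inclusion on $Q$ (so that the image estimate is valid), and that $L$ can be chosen compact with finitely generated homology, are the two points requiring attention.
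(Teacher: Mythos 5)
Your proposal is correct, and it rests on the same two pillars as the paper's proof: Theorem~\ref{thm:ENR} supplying a retraction defined on an open subset of $\R^n$, and a covering of the compact set $\mathrm{cl}_X(Q)$ by finitely many small closed cubes, producing a compact polyhedron through which the inclusion factors, so that the rank in question is bounded by the rank of the polyhedron's finitely generated homology. The genuine difference is where the ENR criterion is invoked. The paper applies it to $L_2=\mathrm{int}_X(P)$: being open in $X$, this set is locally compact and locally contractible (two inheritance facts the paper uses without comment), hence an ENR, so the retraction $r\colon L_3\to L_2$ takes values in $\mathrm{int}_X(P)$ by construction, and the cubes are chosen with closures inside $L_3$. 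You instead apply it once and for all to $X$ itself, whose compactness and local contractibility are exactly the stated hypotheses, and then localize the retraction $r\colon U\to X$ by passing to the open preimage $W=r^{-1}\left(\mathrm{int}_X(P)\right)$, which contains $\mathrm{cl}_X(Q)$ and is mapped by $r$ into $P$. What your variant buys: the only nontrivial input is the ENR property of $X$, the same retraction serves every pair $P,Q$ (which would matter if one wanted uniformity over all lower level sets), and no facts about open subspaces inheriting local compactness or local contractibility need to be verified. What the paper's variant buys: the retraction is already targeted at $\mathrm{int}_X(P)$, so the preimage step disappears. Your factorization $Q\hookrightarrow L\rightarrow \mathrm{int}_X(P)\hookrightarrow P$ and the resulting estimate $\mathrm{rank\,}\mathrm{im\,}H_q(Q\hookrightarrow P)\le \mathrm{rank\,}H_q(L)<+\infty$ are sound; the only degenerate cases worth a word are $Q=\emptyset$ and $W=\R^n$, where your distance $\delta$ is not defined but the claim is trivial, respectively any mesh works.
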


\begin{proof}
We take $L_1=\mathrm{cl}_X(Q)$, $L_2=\mathrm{int}_X(P)$. It is sufficient to show that,  for any $q\in \mathbb{Z}$,  the rank of $\mathrm {im\,}H_q(\iota: L_1 \hookrightarrow L_2)$ is finite, $\iota$ being the inclusion map. To this aim we observe that $L_1$ is closed in $X$, and hence compact. Moreover, $L_2$, being open in $X\subseteq \R^n$, is locally compact and locally contractible, and therefore, by Theorem~\ref{thm:ENR}, $L_2$ is an ENR. Thus, by Definition~\ref{def:ENR}, there exists an open neighborhood $L_3$ of $L_2$ in $\R^n$ and a retraction $r:L_3\rightarrow L_2$. Since $L_3$ is open in $\R^n$, for any $x\in L_3$ there is an open $n$-dimensional cube $Q(x)$ centered at $x$, whose closure is contained in $L_3$. Let us consider the open cover of $L_1$ given by $\mathcal{Q}=\{Q(x)\cap L_1\}_{x\in L_3}$. By compactness, $L_1$ admits a finite subcover: $L_1=\bigcup_{s=1}^r Q(x_s)\cap L_1$. Let us set $K= \bigcup_{s=1}^r \mathrm{cl}_{\R^n}\left(Q(x_s)\right)$. Clearly $L_1$ is contained in $K$, and $K$ is contained in $L_3$. Since $K$ is a finite union of compact polyhedra, it is a polyhedron itself (cf. \cite{RoSa72}), and its homology groups are finitely generated. 

Let us now consider the inclusion $j: L_1\hookrightarrow K$,  and  the restriction of the retraction $r$ to  $K$, $r_{|K}:K\rightarrow L_2$. Since $r$ is a retraction onto $L_2\supseteq L_1$, it holds that, for every $x\in L_1$, $r_{|K}(x)=x$. Hence $ \iota= r_{|K}\circ j:  L_1\hookrightarrow K\rightarrow L_2$. Therefore 
$$\mathrm{rank\, }\mathrm{im\,} H_q(\iota)=\mathrm{rank\,}\mathrm{im\,} \left(H_q(r_{|K})\circ H_q(j)\right)\le \mathrm{rank\, }\mathrm{im\,} H_q(j)\le \mathrm{rank\, } H_q(K)<+\infty.$$
\end{proof}

\begin{proof}[Proof of Theorem \ref{finitezza_ranghi}]

It is sufficient to apply Lemma \ref{inclusione}, for every $\vec u=(u_1,u_2,\ldots ,u_k)$, $\vec v=(v_1,v_2,\ldots,v_k)$ in $\R^k$, with $u_j<v_j$ ($j=1,2,\ldots, k$), setting $Q=X_{\vec f\le \vec u}$ and $P=X_{\vec f\le \vec v}$.  
\end{proof}
 
We conclude this note observing that, for a space $X$ embeddable in $\R^n$, the conditions of compactness and local contractibility  cannot be weakened in order to guarantee finiteness for rank invariants.
Indeed, taking the closed  topologist's sine curve
$$X=\{(x,y)\in\R^2: x\in (0,1],\, y=\sin(1/x)\}\cup \{(x,y)\in\R^2: x=0, \, y\in[-1,1]\},$$ we have an example of a compact but not locally contractible subspace of $\R^2$  whose $0$-th rank invariant, when $X$ is filtered using the height function $f(x,y)=y$, is unbounded. Analogously, taking $X=\{(x,y)\in\R^2: x\in (0,1],\, y=\sin(1/x)\}$, we have an example of a  locally contractible but non-compact subspace of $\R^2$  whose $0$-th rank invariant, when $X$ is filtered using the height function, is unbounded.   

\bibliographystyle{amsplain}
\bibliography{biblio}
\end{document}